\pgfplotsset{compat=1.15}
\newcommand{\R}{\mathbb{R}}
\newcommand{\N}{\mathbb{N}}
\renewcommand{\det}{\operatorname{det}}
\newcommand{\trace}{\operatorname{trace}}
\newcommand{\lmax}{\text{max}}
\newcommand{\E}{\operatorname{\mathcal{E}}}
\title{Low rank approximation of positive semi-definite symmetric matrices using Gaussian elimination and volume sampling}
\author{M. Hegland}
\address{Mathematical Sciences Institute, Australian National University,
ACT~2601, \textsc{Australia}.}
\author{F. de~Hoog}
\address{Data61, CSIRO, GPO Box 1700, Canberra,  ACT~2601,\textsc{Australia}}
\date{1 December 2020}
\begin{document}
\maketitle

\begin{abstract}
Positive semi-definite matrices commonly occur as normal matrices of least squares problems in statistics or as kernel matrices in machine learning and approximation theory. They are typically large and dense. Thus algorithms to solve systems with such a matrix can be very costly. A core idea to reduce computational complexity is to approximate the matrix by one with a low rank. The optimal and well understood choice is based on the eigenvalue decomposition of the matrix. Unfortunately, this is computationally very expensive. 

Cheaper methods are based on Gaussian elimination but they require pivoting. We will show how invariant matrix theory provides explicit error formulas for an averaged error based on volume sampling. The formula leads to ratios of elementary symmetric polynomials on the eigenvalues. We discuss some new an old bounds and include several examples where an expected error norm can be computed exactly.
\end{abstract}

\keywords{Low rank matrix approximation \and volume sampling \and kernel methods \and least squares approximation \and regularisation}

\section{\label{sec1} Introduction}

The eigenvalue decomposition of a \emph{real symmetric semi-positive definite matrix $M\in\R^{n,n}$ of rank $r$} is 
\begin{equation}
    \label{eq:evd}
    M = Q \Lambda Q^T
\end{equation} 
where the factor $Q\in\R^{n,r}$ has $r$ orthonormal columns and $\Lambda\in\R^{r,r}$ is a diagonal matrix with elements $\lambda_1 \geq \cdots \geq \lambda_r > 0$. Such matrices are common in applications in machine learning and information retrieval among others~\cite{MahD09}.
The normal matrix $M=X^TX$ which occurs in least squares problems is one instance.

Often $M$ is very large, dense and unstructured. If the rank $r$ is small, however, the eigenvalue decomposition~(\ref{eq:evd}) shows that it is possible to represent $M$ using the $nr+r$ matrix elements of the matrices $Q$ and $\Lambda$. If many of the eigenvalues $\lambda_i\geq 0$ are close to zero one may set these values to zero. So given a matrix $M$ one then might compute the eigenvalue decomposition, remove small eigenvalues and their corresponding eigenvectors from the factors $\Lambda$ and $Q$ to get a good approximation $M_k$ which is represented by $nk+k$ real numbers. In fact, the Eckart-Young-Mirsky theorem~\cite{Dax10} states that the approximation $M_k$ is optimal. Such an approximation leads to improved computational performance when used in algorithms requiring matrix vector products, leads to higher stability and is useful in data analysis (see principle component analysis). However, the computational cost of the eigenvalue decomposition is typically of order $O(n^3)$ requiring the storage of $n^2$ numbers. Thus for large $n$, this approach is often not feasible in practice.

Thus there is a real need to have a faster algorithm which obtains close to optimal approximation of $M$. Here we consider a popular example of such an algorithm which is often referred to CUR or pseudo-skeleton approximation~\cite{GorTZ97}. This algorithm selects $k$ columns (or rows) of the symmetric matrix $M$ and uses them to approximate $M$. More specifically one may represent $M$ by the equation
\begin{equation}\label{eq:block}
    M = P^T\begin{bmatrix} A & B^T \\ B & C \end{bmatrix}P^T
\end{equation}
where $P$ permutes the columns such that the selected ones are moved to the top and $A\in\R^{k,k}$. The CUR approximation then is of the form
\begin{equation}\label{eq:CUR}
    \widehat{M}_k = P^T \begin{bmatrix} A & B^T \\ B & BA^{-1}B^T\end{bmatrix}P.
\end{equation}
One can show that this approximation is of rank $k$ and in the following sections we will investigate the error of this approximation. For the CUR approximation~(\ref{eq:CUR}) to be defined, $A$ needs to be invertible. One approach is to select the $k$ columns for which the determinant of $A$ is maximal. If the matrix $M$ has a rank $r\geq k$ this choice would guarantee that $A$ is invertible. Here we consider an approach which selects the $k$ columns at random with probability proportional to the determinant of $A$. This choice has been termed \emph{volume sampling}~\cite{DesRVW06}. In this case the probability of selecting $k$ columns which lead to a non-invertible $A$ is zero. For this method one can get an exact expression for the expectation of a suitable norm of the error. This is similar to the optimal case. In contrast to the optimal case, however, the determination of the error is computationally often not feasible even if all the eigenvalues are known. Here we will study this further and present upper bounds for these expected errors.

One suitable norm for the error analysis is the nuclear norm (also called trace norm or Schatten 1 norm). The nuclear norm of $M$ is the sum of its singular values. In the case of semi-definite symmetric matrices $M$ the singular values are equal to the eigenvalues and one has 
\begin{equation}\label{eq:nuclear}
    \lVert M \rVert_* = \sum_{i=1}^n \lambda_i
\end{equation}
where $\lVert M \rVert_*$ denotes the nuclear norm. As we will always assume that the eigenvalues are numbered in decreasing order ($\lambda_{i+1}\leq \lambda_i$), the nuclear norm of the error of the optimal approximation is 
\begin{equation}\label{eq:error_opt}
    \lVert M_k - M \rVert_* = \sum_{i=k+1}^n \lambda_i.
\end{equation}

In Section~\ref{sec2} the volume sampling CUR approach is discussed and a formula for the expected error in terms of
matrix invariants is established. The errors as functions of the eigenvalues are further discussed in Section~\ref{sec3} and two special types of matrices are considered in more depth.

\section[Rank k approximation]{\label{sec2} Rank k approximation and expected error in terms of matrix invariants}

In order to establish the framework for volume sampling we introduce the sample space to be the 
symmetric group $\Omega = S_n$. Then any sample $\omega\in S_n$ is a permutation of a set 
with $n$ elements. The symmetric group is the structure of the set of permutation matrices
of arrays with $n$ elements and we will denote the permutation matrix defined by some 
$\omega\in\Omega$ as $P_\omega$. The function mapping $\omega$ to the set of matrices defined
by $M(\omega)= P_\omega M P_\omega^T$ is then a matrix valued random variable. We will denote
the blocks defined in equation~(\ref{eq:block}) of $M(\omega)$ by $A(\omega)$, $B(\omega)$ and
$C(\omega)$ and for simplicity denote the corresponding variables by $M$, $A$, $B$, and $C$.
Finally, we define the probability of some $\omega\in\Omega$ to be
\begin{equation}\label{eq:probability}
    p_k(\omega) := \frac{\det A(\omega)}{\sum_{\xi\in\Omega} \det A (\xi)}.
\end{equation}
With this framework we can now define the expected error of the CUR approximation to be
\begin{equation}\label{eq:error1}
   \E\left(\lVert \widehat{M}_k - M \rVert_*\right) := \sum_{p_k(\omega)_\neq 0} p_k(\omega)\,\lVert C(\omega) - B(\omega)A(\omega)^{-1}B(\omega)^T \lVert_* 
\end{equation}
i.e. the expectation is a sum over permutations $\omega$ with nonzero probability.

The determinant $\det A(\gamma)$ is a $k$-th order principle minor of the unpermuted matrix $M$. On can see
that each principle minor of $M$ occurs $k!(n-k)!$ times when cycling through all the elements of $\gamma\in\Omega$. Let $c_j(M)$ denote the sum all $j$-th principle minors of $M$ in the following theorem.
\begin{theorem}[expected error of volume sampling CUR]\label{thm1}
    \begin{equation}\label{eq:thm1}
        \E\left(\lVert \widehat{M}_k - M \rVert_*\right) = (k+1)\frac{c_{k+1}(M)}{c_k(M)}
    \end{equation}
\end{theorem}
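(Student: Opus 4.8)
The plan is to compute both the numerator and the denominator of the expectation in \eqref{eq:error1} separately and show that they reduce to elementary symmetric expressions in the principal minors.

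The denominator is immediate from the combinatorial remark preceding the theorem: $\det A(\xi)$ is a $k$-th principal minor of $M$, and each such minor is realised by exactly $k!(n-k)!$ permutations, so $\sum_{\xi}\det A(\xi)=k!(n-k)!\,c_k(M)$. For the numerator I would first reduce the nuclear norm to a trace. Since $M$ is positive semi-definite, so is each $M(\omega)=P_\omega M P_\omega^T$, and hence, whenever $\det A(\omega)>0$, the Schur complement $S(\omega):=C(\omega)-B(\omega)A(\omega)^{-1}B(\omega)^T$ is symmetric positive semi-definite. Its eigenvalues are nonnegative, so $\lVert S(\omega)\rVert_*=\trace S(\omega)$, which replaces the nonsmooth norm by a linear functional.

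Next I would express this trace through principal minors of $M$. Bordering the block $A(\omega)$ by a single diagonal index $i$ taken from the $C$-block and invoking the Schur determinant identity gives
\[
    \det\begin{bmatrix} A(\omega) & b_i \\ b_i^T & c_{ii}\end{bmatrix}=\det A(\omega)\,\bigl(c_{ii}-b_i^T A(\omega)^{-1}b_i\bigr),
\]
so the $i$-th diagonal entry of the Schur complement is $S(\omega)_{ii}=\det M_{(A\cup\{i\})}/\det A(\omega)$, where $M_{(A\cup\{i\})}$ denotes the $(k+1)$-th principal submatrix indexed by the $A$-block together with $i$. Summing over the $n-k$ diagonal indices yields $\det A(\omega)\,\trace S(\omega)=\sum_i\det M_{(A(\omega)\cup\{i\})}$. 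The decisive point is that the factor $\det A(\omega)$ appearing here is exactly the weight in the numerator of the expectation, so after multiplying by $p_k(\omega)$ the inverse determinant never has to be formed. Summing over all $\omega$, and using that each $(k+1)$-th principal minor $\det M_T$ is produced $k+1$ times (once for each index of $T$ that may be removed to leave a $k$-subset) while each $k$-subset again comes from $k!(n-k)!$ permutations, the numerator collapses to $k!(n-k)!\,(k+1)\,c_{k+1}(M)$.

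Dividing the numerator by the denominator, the common factor $k!(n-k)!$ cancels and \eqref{eq:thm1} follows. The one place requiring care --- which I expect to be the genuinely subtle step rather than any single calculation --- is that the expectation omits permutations with $\det A(\omega)=0$. For a positive semi-definite matrix, however, writing $M$ in Gram form shows that a principal submatrix has the rank of the corresponding subset of generating vectors; hence a singular $k\times k$ principal submatrix (rank at most $k-1$) forces every $(k+1)\times(k+1)$ principal submatrix containing it to have rank at most $k$, and so to be singular as well. The omitted permutations therefore contribute nothing to the numerator sum, and the counting above is unaffected.
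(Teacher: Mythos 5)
Your proof is correct and follows essentially the same route as the paper's: reduce the nuclear norm to a trace via positive semi-definiteness of the Schur complement, rewrite each diagonal entry of the Schur complement as a ratio of principal minors through the Schur determinant identity, and double-count the resulting $(k+1)\times(k+1)$ minors over permutations to obtain $(k+1)!\,(n-k)!\,c_{k+1}(M)$ in the numerator against $k!\,(n-k)!\,c_k(M)$ in the denominator. The only difference is in the auxiliary step: where you dispose of the singular-$A(\omega)$ permutations by a Gram-factorisation rank argument, the paper's Lemma~\ref{lem2} establishes the same vanishing of bordered minors via a null-space/quadratic-form argument; both serve the identical role.
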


In the proof we will use the lemma:
\begin{lemma}\label{lem2}
If $\begin{bmatrix} A & b \\ b^T & \gamma\end{bmatrix}$ is positive semidefinite and $\det A = 0$ then $$\det \begin{bmatrix} A & b \\ b^T & \gamma\end{bmatrix}=0.$$
\end{lemma}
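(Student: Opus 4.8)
The plan is to exhibit an explicit nonzero vector in the kernel of the full $(k+1)\times(k+1)$ block matrix, which immediately forces its determinant to vanish. Write $N=\begin{bmatrix} A & b \\ b^T & \gamma\end{bmatrix}$. Since $\det A = 0$, there is a nonzero $v\in\R^k$ with $Av=0$, and I would pad it to $w=\begin{bmatrix} v \\ 0\end{bmatrix}\in\R^{k+1}$. Evaluating the quadratic form of $N$ at $w$ collapses, because the entries of $w$ multiplying $b$ and $\gamma$ are zero, leaving
\[
    w^TNw = v^TAv = v^T(Av) = 0.
\]

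The crucial step is then the positive-semidefinite implication $w^TNw=0\Rightarrow Nw=0$. I would justify it by factoring $N=R^TR$ (via the symmetric square root or a Cholesky-type factorisation), so that $w^TNw=\lVert Rw\rVert^2$; its vanishing gives $Rw=0$ and hence $Nw=R^TRw=0$. This is the one point that needs to be stated with care, and it is exactly where positive semidefiniteness is essential, since the analogous claim fails for indefinite $N$. It is the main, though entirely standard, obstacle in the argument.

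Finally, since $w\neq 0$ lies in the kernel of $N$, the matrix $N$ is singular and $\det N=0$, as claimed. I would note that when $A$ is invertible the Schur-complement identity $\det N=\det A\,(\gamma-b^TA^{-1}b)$ settles the matter instantly, but it is unavailable here precisely because $\det A=0$; the kernel argument above reaches the conclusion directly, with no recourse to a perturbation or limiting argument.
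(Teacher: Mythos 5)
Your proof is correct, and it shares the paper's overall strategy: both arguments exhibit the padded null vector $w = (v^T,\,0)^T$, where $v\neq 0$ and $Av=0$, as an explicit nonzero element of the kernel of $N$, which forces $\det N = 0$. The difference is in how kernel membership is justified. The paper evaluates the quadratic form at the perturbed vectors $(v^T,\,\eta)^T$ and uses nonnegativity of $2\eta\, b^Tv + \eta^2\gamma$ for all $\eta\in\R$ to conclude $b^Tv=0$; then $Nw=0$ is immediate from the block structure, with no factorisation of $N$ required. You instead compute $w^TNw=0$ and invoke the standard implication $w^TNw=0 \Rightarrow Nw=0$ for positive semidefinite $N$, proved via a factorisation $N=R^TR$. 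This is valid, but it imports the spectral theorem (or a semidefinite Cholesky factorisation) as a tool, whereas the paper's argument uses only the definition of positive semidefiniteness. The two routes are closer than they might appear: your key implication is itself usually proved by the same perturbation trick --- for arbitrary $y$ and all $t\in\R$, $0 \le (w+ty)^TN(w+ty) = 2t\,y^TNw + t^2\,y^TNy$ forces $y^TNw=0$ --- and the paper's proof is exactly this argument specialised to the one direction $y$ (the last standard basis vector) that is actually needed. What your version buys is modularity, isolating a reusable general lemma; what the paper's buys is self-containedness and an explicit record of the fact that matters downstream in the proof of Theorem~\ref{thm1}, namely that $b$ is orthogonal to the null space of $A$.
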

\begin{proof}
As $\det A = 0$ there exists an $x\neq 0$ such that $Ax=0$. From the semi positive-definiteness one then gets for all $\eta\in\R$
$$\begin{bmatrix} x^T & \eta\end{bmatrix} 
  \begin{bmatrix}A & b \\ b^T & \gamma \end{bmatrix} 
  \begin{bmatrix} x \\ \eta\end{bmatrix} = 2 \eta x^Tb + \eta^2\gamma
  \geq 0$$
  and thus $b^T x = 0$. Thus $\begin{bmatrix}x \\ 0 \end{bmatrix}$ is in the null space of 
  $\begin{bmatrix}A & b \\ b^T & \gamma \end{bmatrix}$.
\end{proof}

\begin{proof}[Proof of  Theorem~\ref{thm1}]
As each minor occurs $k!(n-k)!$ times in the sequence $A(\Omega)$ one has
    $$\sum_{\omega\in\Omega} det A(\omega) = k!(n-k)! c_k(M)$$
and consequently $p_k(\omega) = \frac{\det A(\omega)}{k!(n-k)! c_k(M)}.$
As $C-BA^{-1}B^T$ is positive semi-definite one gets 
    $$\lVert BA^{-1}B^T - C \rVert_* = \trace C - BA^{-1}B^T = 
       \sum_{i=1}^{n-k} c_{i,i} - b_i^T A^{-1} b_i = 
        \frac{\sum_{i=1}^{n-k} \det \begin{bmatrix} A & b_i\\ b_i^T & c_{i,i} \end{bmatrix}}{\det A} $$
     where $b_i^T$ is the $i$-th row of $B$. Multiplying both sides with $\det A$ then summing over all permutations and applying Lemma~\ref{lem2} gives 
    $$ \sum_{p(\omega)\neq 0}(\det A(\omega)) 
        \lVert B(\omega)A(\omega)^{-1}B(\omega)^T-C(\omega)\lVert_* 
        = \sum_{\omega\in\Omega}\sum_{i=1}^{n-k} \
          det \begin{bmatrix}A & b_i \\ b_i^T & c_{i,i}\end{bmatrix}$$
   Interchanging the order of the double sum gives $(k+1)!(n-k)! c_{k+1}(M)$ and inserting
   $p_k(A) = (\det A)/(k!(n-k)! c_k(M))$ from above completes the proof.
\end{proof}

\section[Diagonal matrices]{The case of diagonal matrices or error and stability bounds in terms of the eigenvalues \label{sec3}}

The matrix invariants $c_k(M)$ show up as coefficients of the characteristic polynomial (up to signs). Thus they are also invariant under orthogonal similarity transforms and thus 
        $$c_k(M) = c_k(\Lambda) = e_k(\lambda_1,\ldots,\lambda_n)$$
where $e_k$ are the \emph{elementary symmetric polynomials}. The evaluation of elementary symmetric 
polynomials is computationally very demanding in general. We will thus focus on upper bounds. A simple bound is given in Proposition~\ref{lemX}. This bound is tight and gives a good indication of what the error is for spectra which are rapidly decreasing. For spectra like $\lambda_i=1/i^2$, however, this bound is highly overestimating the error. In this section we will discuss ways to get better bounds for these cases. 
\begin{figure}[t]
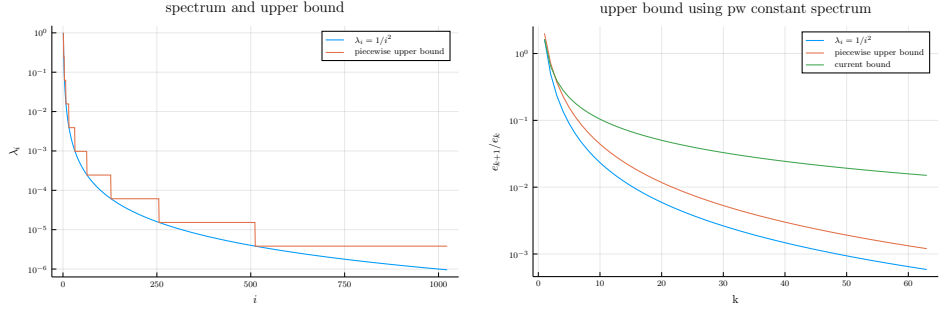

\center
\begin{subfigure}[b]{0.45\textwidth}
\begin{adjustbox}{width=\textwidth}
\input{./pwlambda}
\end{adjustbox}
\end{subfigure}
\begin{subfigure}[b]{0.45\textwidth}
\begin{adjustbox}{width=\textwidth}
\input{./pwelsymm}\hfill
\end{adjustbox}
\end{subfigure}
\caption{\label{fig:ev} Eigenvalues and the ratios $e_{k+1}/e_k$}
\end{figure}
First we demonstrate the performance of these methods with a computational experiment. Let $\lambda_i = 1/i^2$ for $i=1,\ldots,n$. We then define a piecewise constant array $\mu$ by
$$\mu_i = \frac{1}{4^l}, \quad \text{where $i=2^l+j$, $j=1,\ldots,2^l-1$ and $l=0,\ldots,l_\lmax-1$}.$$
One can show that $\mu_i \geq \lambda_i$ for $i=1,\ldots,n$. From the monotonicity (Lemma~\ref{lem:mono})  one sees that
    $$\frac{e_{k+1}(\lambda)}{e_k(\lambda)} \leq \frac{e_{k+1}(\mu)}{e_k(\mu)}.$$
This is illustrated computationally in Figure~\ref{fig:ev} where the simple bound from Proposition~\ref{lemX} is also displayed to demonstrate the superiority of the new bound. While not as simple as the simple bound, the bound we give here is substantially more accurate and useful for error bounds. In contrast to the values for $\lambda$ the upper bound can be computed even for very large $n$ and medium sized $k$ as explained below.

An upper bound for $e_{k+1}/e_k$ which does not require any properties of the  eigenvalues is obtained from
\begin{proposition}[simple bound]\label{lemX} 
Let $n,k \in \N$, $n>k$ and $\lambda_i\geq 0$ for all $i=1,\ldots,n$. Then  $$e_{k+1}(\lambda_1,\ldots,\lambda_n) \leq e_k(\lambda_1,\ldots,\lambda_n)\, e_1(\lambda_{k+1},\ldots,\lambda_n).$$  
\end{proposition}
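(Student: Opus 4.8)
The plan is to expand both sides into sums of monomials indexed by subsets and then compare them termwise, exploiting that every monomial is nonnegative. Write $A = \{1,\dots,k\}$ and $B = \{k+1,\dots,n\}$, so that $e_1(\lambda_{k+1},\dots,\lambda_n) = \sum_{b\in B}\lambda_b$, while $e_{k+1}(\lambda) = \sum_{|T|=k+1}\prod_{i\in T}\lambda_i$ and $e_k(\lambda) = \sum_{|T'|=k}\prod_{i\in T'}\lambda_i$, the sums running over subsets $T,T'$ of $\{1,\dots,n\}$ of the indicated cardinality. The idea is that the product on the right expands into a sum over pairs $(T',b)$ which, after grouping, reproduces $e_{k+1}(\lambda)$ together with a nonnegative remainder.

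First I would record the single structural fact the argument rests on: since $|A| = k < k+1$, no $(k+1)$-element subset $T$ can be contained in $A$, and hence $T\cap B \neq \emptyset$, i.e.\ $|T\cap B|\geq 1$ for every such $T$. Next I would expand the right-hand side as $e_k(\lambda)\,\sum_{b\in B}\lambda_b = \sum_{|T'|=k}\sum_{b\in B}\lambda_b\prod_{i\in T'}\lambda_i$, a single sum over all pairs $(T',b)$ with $|T'|=k$ and $b\in B$, and split it according to whether $b\notin T'$ or $b\in T'$. The case $b\in T'$ contributes $R := \sum_{|T'|=k}\sum_{b\in T'\cap B}\lambda_b^{2}\prod_{i\in T'\setminus\{b\}}\lambda_i \geq 0$, a sum of nonnegative terms.

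The key step is the bookkeeping for the case $b\notin T'$. Here $T := T'\cup\{b\}$ is a $(k+1)$-subset and the corresponding term equals $\lambda_b\prod_{i\in T'}\lambda_i = \prod_{i\in T}\lambda_i$; as $(T',b)$ ranges over all admissible pairs with $b\notin T'$, each fixed $(k+1)$-subset $T$ is produced exactly $|T\cap B|$ times, once for each choice of $b\in T\cap B$. Combining the two cases gives the exact identity $e_k(\lambda)\,e_1(\lambda_{k+1},\dots,\lambda_n) = \sum_{|T|=k+1}|T\cap B|\,\prod_{i\in T}\lambda_i + R$, and invoking $|T\cap B|\geq 1$ together with $R\geq 0$ yields $e_k(\lambda)\,e_1(\lambda_{k+1},\dots,\lambda_n) \geq \sum_{|T|=k+1}\prod_{i\in T}\lambda_i = e_{k+1}(\lambda)$, as claimed. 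The only point requiring care is the multiplicity count $|T\cap B|$: one must make sure the decomposition $T = T'\cup\{b\}$ is tallied with the correct frequency rather than once. Equivalently one can phrase this as an injection sending each $(k+1)$-subset $T$ to the pair $(T\setminus\{b(T)\},\,b(T))$ for a fixed choice $b(T)\in T\cap B$, which is well defined precisely because $T\cap B\neq\emptyset$. I expect this counting/injection bookkeeping to be the main --- and essentially the only --- obstacle, with everything else being routine expansion.
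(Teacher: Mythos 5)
Your proof is correct, but it takes a genuinely different route from the paper's. The paper argues by induction on the number of variables, using the recursion
\[
e_{k+1}(\lambda_1,\ldots,\lambda_{n+1}) \;=\; e_k(\lambda_1,\ldots,\lambda_n)\,\lambda_{n+1} \;+\; e_{k+1}(\lambda_1,\ldots,\lambda_n),
\]
applying the induction hypothesis to the second term and factoring, which produces $e_k(\lambda_1,\ldots,\lambda_n)\,e_1(\lambda_{k+1},\ldots,\lambda_{n+1})$; strictly speaking the paper then also needs the unstated (but obvious) monotonicity $e_k(\lambda_1,\ldots,\lambda_n)\leq e_k(\lambda_1,\ldots,\lambda_{n+1})$ to reach the claimed right-hand side. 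Your argument is instead a direct double counting of monomials: expanding the product over pairs $(T',b)$ and regrouping by $T=T'\cup\{b\}$ gives the exact identity
\[
e_k(\lambda)\,e_1(\lambda_{k+1},\ldots,\lambda_n)\;=\;\sum_{|T|=k+1}|T\cap B|\prod_{i\in T}\lambda_i \;+\; R,
\qquad R\geq 0,
\]
where your multiplicity count is right — the admissible pairs producing a fixed $T$ are exactly $(T\setminus\{b\},b)$ for $b\in T\cap B$ — and the pigeonhole fact $|T\cap B|\geq 1$, valid because $|T|=k+1>k=|A|$, finishes the proof. What your route buys is an explicit formula for the slack: the difference of the two sides equals $\sum_{|T|=k+1}\bigl(|T\cap B|-1\bigr)\prod_{i\in T}\lambda_i + R$, so one can read off when the bound is close to sharp, namely when the tail variables are small enough that sets meeting $B$ in more than one index, and the squared terms in $R$, contribute little. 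The paper's induction is shorter and avoids all counting, but it yields only the inequality and hides this error term; your identity is self-contained and slightly more informative.
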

  \begin{proof}
  The proof is by induction where the main step is (the second line uses the induction assumption)
  \begin{align*}
      e_{k+1}(\lambda_1,\ldots,\lambda_{n+1}) & = e_k(\lambda_1,\ldots,\lambda_n)\,\lambda_{n+1} + e_{k+1}(\lambda_1,\ldots,\lambda_n) \\
      &\leq e_k(\lambda_1,\ldots,\lambda_n)\, \lambda_{n+1} + e_k(\lambda_1,\ldots,\lambda_n)\,e_1(\lambda_{k+1},\ldots,\lambda_n) \\
      & = e_k(\lambda_1,\ldots,\lambda_n)\, e_1(\lambda_{k+1},\ldots,\lambda_{n+1}).
  \end{align*}
  \end{proof} 
Thus $e_{k+1}/e_k$ is bounded by the optimal error and by Theorem~\ref{thm1} the CUR error is bounded by $k+1$ times the optimal error.

In the case where $\lambda_i = q^i$ for some $q\in (0,1)$ one can compute the elementary symmetric polynomials explicitly as
\begin{proposition}[power eigenvalues]
 \begin{equation}\label{eq:espq}
  e_k(1,q,\ldots,q^{n-1}) = q^{k(k-1)/2}\, \prod_{i=1}^k \frac{1-q^{n-i+1}}{1-q^i}, \quad \text{for all $k,n \in \N$}.   
 \end{equation}
\end{proposition}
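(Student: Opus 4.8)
The plan is to recognise the product in (\ref{eq:espq}) as a Gaussian ($q$-)binomial coefficient and then to prove the identity by induction on $n$ using the standard recurrence for elementary symmetric polynomials. Write $G(n,k):=\prod_{i=1}^{k}\frac{1-q^{\,n-i+1}}{1-q^{\,i}}$, so that the assertion is $e_k(1,q,\dots,q^{n-1})=q^{k(k-1)/2}\,G(n,k)$. I treat $q$ as a fixed parameter with $q\neq1$ so that the denominators do not vanish; both sides are polynomials in $q$ (Gaussian binomials are polynomials), so the case $q=1$ then follows by continuity and recovers the ordinary binomial coefficient. The base case $n=1$ and the boundary case $k=0$ are immediate, and for $k>n$ the factor $i=n+1$ in $G(n,k)$ equals $1-q^{0}=0$, matching $e_k=0$.

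For the induction step I adjoin the variable $q^{n-1}$ to $\{1,q,\dots,q^{n-2}\}$ and apply
\[
 e_k(1,q,\dots,q^{n-1}) = e_k(1,q,\dots,q^{n-2}) + q^{\,n-1}\,e_{k-1}(1,q,\dots,q^{n-2}).
\]
Inserting the induction hypothesis into both terms and extracting the common power of $q$ gives
\[
 e_k(1,q,\dots,q^{n-1}) = q^{k(k-1)/2}\Bigl[\,G(n-1,k)+q^{\,n-k}\,G(n-1,k-1)\,\Bigr],
\]
where I have used $q^{\,n-1}q^{(k-1)(k-2)/2}=q^{k(k-1)/2}q^{\,n-k}$, i.e. $\tfrac{(k-1)(k-2)}{2}+(n-1)=\tfrac{k(k-1)}{2}+(n-k)$. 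Thus the whole statement reduces to the single algebraic identity $G(n,k)=G(n-1,k)+q^{\,n-k}G(n-1,k-1)$, one of the two standard $q$-Pascal rules.

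The only genuinely computational step is verifying this $q$-Pascal rule from the product, and it is short, so I expect no real obstacle. Peeling the $i=k$ factor off $G(n-1,k)$ gives $G(n-1,k)=G(n-1,k-1)\,\frac{1-q^{\,n-k}}{1-q^{\,k}}$, so
\[
 G(n-1,k)+q^{\,n-k}G(n-1,k-1) = G(n-1,k-1)\Bigl[\tfrac{1-q^{\,n-k}}{1-q^{\,k}}+q^{\,n-k}\Bigr].
\]
Combining the bracket over $1-q^{k}$ collapses its numerator to $1-q^{\,n-k}+q^{\,n-k}-q^{\,n}=1-q^{\,n}$, whence the right-hand side equals $G(n-1,k-1)\,\frac{1-q^{\,n}}{1-q^{\,k}}=G(n,k)$, the last equality being the absorption relation $G(n,k)=\frac{1-q^{\,n}}{1-q^{\,k}}G(n-1,k-1)$ checked by cancelling common factors. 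This closes the induction; the only point needing care is the exponent bookkeeping $k(k-1)/2$ versus $(k-1)(k-2)/2$. As an alternative, one could obtain (\ref{eq:espq}) in one stroke from the generating identity $\prod_{i=0}^{n-1}(1+q^{i}t)=\sum_{k}e_k(1,q,\dots,q^{n-1})\,t^{k}$ together with the $q$-binomial theorem, but the inductive route above is self-contained.
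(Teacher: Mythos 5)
Your proof is correct and takes essentially the same route as the paper's: induction on $n$ using the recurrence $e_k(1,\dots,q^{n-1})=e_k(1,\dots,q^{n-2})+q^{n-1}e_{k-1}(1,\dots,q^{n-2})$, with the induction step closed by a one-line rational identity --- your $q$-Pascal rule is the paper's ratio computation in Gaussian-binomial clothing. The only cosmetic difference is that you keep the prefactor $q^{k(k-1)/2}$ explicit and invoke the $q$-binomial framework, whereas the paper absorbs that power into the equivalent product $\prod_{i=1}^{k}(q^{i-1}-q^{n})/(1-q^{i})$ and works with the ratio $e_{k,n}/e_{k-1,n}$.
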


\begin{proof}
  Let $e_{k,n}:= e_k(1,q,\ldots,q^{n-1})$. We then use induction over $n$ to show that equation~(\ref{eq:espq}) holds for all $k$.
  
  First, for $n=1$ one has $e_{0,1}=e_{1,1}=1$ and $e_{k,1}=0$ by definition of $e_k$. Thus the claimed result holds for $n=1$.
  
  We now have to show that if equation~(\ref{eq:espq}) holds for some $n$ and all $k$ it also holds when $n$ is replaced by $n+1$. For this we use the following recursion for the symmetric elementary polynomials:
  $$e_{k,n+1} = q^n e_{k-1,n} + e_{k,n}.$$
  Then one verifies that the claimed equation~(\ref{eq:espq}) is equivalent to
  $$e_{k,n} = \prod_{i=1}^k \frac{q^{i-1}-q^n}{1-q^i}.$$
  It follows that
  $$e_{k,n} = \frac{q^{k-1}-q^n}{1-q^k}\,  e_{k-1,n}.$$
  One then gets
  \begin{align*}
    e_{k,n+1} &= q^n e_{k-1,n} + e_{k,n} = \left(q^n+\frac{q^{k-1}-q^n}{1-q^k}\right) \,  e_{k-1,n} \\ &=
    \frac{q^{k-1}(1-q^{n+1})}{1-q^k}
    \,  e_{k-1,n} =
    \frac{1-q^{n+1}}{1-q^k}\,
  \prod_{i=1}^{k-1}\frac{q^i-q^{n+1}}{1-q^i}\\
    &= \prod_{i=1}^k \frac{q^{i-1}-q^{n+1}}{1-q^i}.
  \end{align*}
\end{proof}
The ratio $e_{k+1}/e_k=\frac{q^k-q^n}{1-q^{k+1}}$ obtained from this result then leads to the expected error $$(k+1)\frac{e_{k+1}}{e_k} = \frac{k+1}{1+q+\cdots+ q^k}(q^k+\cdots + q^n).$$ 
One sees that the CUR method is competitive for small $q$ but not for $q\approx 1$.

We now derive some results which are useful in the derivation and computation of error bounds. First we pad the eigenvalue vectors with zeros so that $\lambda\in \ell_0$, the set of series which are nonzero for finitely many indices.

 For any $k$ we define the head of $\lambda$ to be $\lambda^h = (\lambda_1,\ldots,\lambda_k,0,\ldots)$ and the tail $\lambda^t=(\lambda_{k+1},\ldots,\lambda_n,0,\ldots)$.
We introduce a concatenation of two spectra $\lambda$ and $\mu$ denoted by $[\lambda,\mu]$ where
$$[\lambda,\mu] = (\lambda_1,\ldots,\lambda_n,\mu_1,\ldots,\mu_m,0,\ldots)$$
which potentially is reordered for size but in the cases considered here we have $\mu_1 \leq \lambda_n$.
Thus $\lambda=[\lambda^h,\lambda^t]$. The decrease of the tail is modelled by the sequence $\rho$ with
$$\rho_i = \lambda^t/\lambda_{k+1}.$$
We then introduce a function $f: \R_0^\infty \rightarrow \R^{k+2}$ with components
$$f_i(\lambda) = e_i(\lambda).$$
We now introduce the \emph{convolution} of two elements of $\R_+^m$ by 
$$(u*v)_i = \sum_{i+j = k} u_i v_j. $$
As the $e_i$ are coefficients of a characteristic polynomial one has
\begin{lemma}[convolution theorem]
$$f([\lambda,\mu]) = f(\lambda)*f(\mu).$$
\end{lemma}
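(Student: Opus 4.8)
The plan is to exploit the standard generating-function description of the elementary symmetric polynomials together with the fact that concatenation of spectra corresponds to multiplication of the associated polynomials. First I would record the generating identity: for any finite, zero-padded spectrum $\lambda=(\lambda_1,\ldots,\lambda_n,0,\ldots)\in\ell_0$,
$$\prod_{i=1}^{n}(1+\lambda_i x)=\sum_{p\geq 0} e_p(\lambda)\,x^p.$$
Expanding the product and collecting the terms of degree $p$ in $x$ reproduces exactly the sum of all $p$-fold products of distinct $\lambda_i$, which is $e_p(\lambda)$; this is the characteristic-polynomial coefficient relation the author alludes to. Padding by zeros is harmless, since each factor $1+0\cdot x$ equals $1$ and leaves the product unchanged, so $e_p(\lambda)=0$ for $p>n$ and the polynomial is well defined on $\ell_0$.

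Next I would write the generating polynomial of the concatenation $[\lambda,\mu]$ and factor it. Because $[\lambda,\mu]$ simply lists the entries of $\lambda$ followed by those of $\mu$, we have
$$\prod_i\bigl(1+[\lambda,\mu]_i\,x\bigr)=\left(\prod_{i=1}^{n}(1+\lambda_i x)\right)\left(\prod_{j=1}^{m}(1+\mu_j x)\right)=\left(\sum_{i\geq 0} e_i(\lambda)x^i\right)\left(\sum_{j\geq 0} e_j(\mu)x^j\right).$$
Comparing the coefficient of $x^p$ on both sides, the left-hand side contributes $e_p([\lambda,\mu])$, while the Cauchy product on the right contributes
$$\sum_{i+j=p} e_i(\lambda)\,e_j(\mu)=\bigl(f(\lambda)*f(\mu)\bigr)_p,$$
which is precisely the convolution introduced above. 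Hence $f_p([\lambda,\mu])=\bigl(f(\lambda)*f(\mu)\bigr)_p$ for every $p$, and restricting to the components $p=0,\ldots,k+1$ gives the asserted identity in $\R^{k+2}$.

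There is no serious obstacle in this argument; the only point requiring a little care is the truncation to $\R^{k+2}$. One must check that the convolution restricted to these components is self-consistent, that is, that computing $\bigl(f(\lambda)*f(\mu)\bigr)_p$ for $0\leq p\leq k+1$ uses only the retained entries $e_i(\lambda),e_j(\mu)$ with $i,j\leq k+1$. This holds because $i+j=p\leq k+1$ together with $i,j\geq 0$ forces $i\leq k+1$ and $j\leq k+1$, so the truncated map $f$ and the truncated convolution are compatible and the coefficient comparison remains valid component by component.
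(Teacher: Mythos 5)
Your proof is correct and follows exactly the route the paper intends: the paper justifies the lemma only by the remark that the $e_i$ are coefficients of a characteristic polynomial, and your argument via the generating polynomial $\prod_i(1+\lambda_i x)$, its factorisation over the concatenation $[\lambda,\mu]$, and the Cauchy product of coefficients is precisely that idea written out in full. Your closing observation that the truncation to $\R^{k+2}$ is self-consistent (since $i+j=p\leq k+1$ forces $i,j\leq k+1$) is a worthwhile detail the paper leaves implicit.
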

The next lemma is a consequence of the fact that $f_i$ is a homogeneous $i-1$st degree polynomial.
\begin{lemma}[scaling lemma]
  The $i$-th component $f_i$ of $f$ satisfies
  $$f_i(s\lambda) = s^{i-1}f_i(\lambda).$$
\end{lemma}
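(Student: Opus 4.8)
The plan is to reduce the statement to the elementary fact that a homogeneous polynomial of degree $d$ scales by the factor $s^d$ under the substitution $\lambda \mapsto s\lambda$. First I would recall the explicit monomial expansion
$$e_j(\lambda_1,\ldots,\lambda_n) = \sum_{\substack{S\subseteq\{1,\ldots,n\}\\ |S|=j}} \prod_{\ell\in S}\lambda_\ell,$$
in which every summand is a product of exactly $j$ of the coordinates of $\lambda$. Each such summand is therefore a monomial of total degree $j$, and a sum of monomials all of the same degree $j$ is homogeneous of degree $j$; consequently $e_j(s\lambda)=s^j e_j(\lambda)$ holds for every scalar $s$, directly from factoring $s$ out of each of the $j$ factors in every term.

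It then remains only to match the degree to the component index. Since the component $f_i$ of $f$ is the elementary symmetric polynomial of the appropriate order — namely the one that is homogeneous of degree $i-1$, as noted in the remark preceding the statement — substituting into the homogeneity identity just established gives
$$f_i(s\lambda)=s^{i-1}f_i(\lambda),$$
which is the assertion.

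The argument carries no real obstacle; the only point that deserves care is the index bookkeeping. One must confirm that the labelling of the components of $f$ is such that $f_i$ picks out the degree-$(i-1)$ invariant, so that the scaling exponent comes out as $i-1$ rather than $i$. Once that alignment is fixed, the lemma is immediate from the homogeneity of the elementary symmetric polynomials, and no further estimate or recursion is needed.
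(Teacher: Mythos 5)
Your proof is correct and takes essentially the same route as the paper, whose entire justification is the remark that $f_i$ is a homogeneous polynomial of degree $i-1$; you simply make that homogeneity explicit via the monomial expansion of $e_j$. Your caution about index bookkeeping is well placed: the paper's literal definition $f_i(\lambda)=e_i(\lambda)$ would give exponent $i$, so the lemma implicitly uses the shifted labelling $f_i=e_{i-1}$ (as its later use with components $u^h_0,\ldots,u^h_k$ and $w_0,\ldots,w_{k+1}$ confirms), which is exactly how you resolve it.
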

One then has
\begin{proposition}
  Let $u^h=f(\lambda^h)$ and $w=f(\rho)$. Then
  $$\frac{e_{k+1}(\lambda)}{e_k(\lambda)} =
  \gamma \lambda_{k+1},$$
  where
  $$\gamma = \frac{\sum_{i=0}^k \lambda_{k+1}^{k-i}\; w_{k+1-i} u^h_i}{\sum_{i=0}^k\; \lambda_{k+1}^{k-i} w_{k-i} u^h_i}.$$
\end{proposition}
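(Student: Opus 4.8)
The plan is to reduce everything to the two identities already available, namely the convolution theorem and the scaling lemma, applied to the splitting $\lambda=[\lambda^h,\lambda^t]$. Applying $f$ to this decomposition expresses each elementary symmetric polynomial $e_m(\lambda)$ as a convolution of the head data $u^h=f(\lambda^h)$ with the tail data $f(\lambda^t)$; since only $m=k$ and $m=k+1$ enter the target ratio, I only need those two convolution coefficients. The tail is then rescaled into $\rho$ by homogeneity, which is precisely where $w=f(\rho)$ and the power of $\lambda_{k+1}$ appear.

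First I would write the convolution theorem out componentwise. For any index $m$,
\[
e_m(\lambda)=e_m([\lambda^h,\lambda^t])=\sum_{i+j=m} e_i(\lambda^h)\,e_j(\lambda^t).
\]
The decisive simplification is that $\lambda^h=(\lambda_1,\ldots,\lambda_k,0,\ldots)$ has only $k$ nonzero entries, so $e_i(\lambda^h)=0$ whenever $i>k$. Hence both sums truncate at $i=k$: for $m=k$ the term $i=k$ is already the last one, and for $m=k+1$ the would-be term $e_{k+1}(\lambda^h)\,e_0(\lambda^t)$ simply vanishes. Writing $u^h_i=e_i(\lambda^h)$ this gives
\[
e_k(\lambda)=\sum_{i=0}^{k} u^h_i\,e_{k-i}(\lambda^t),
\qquad
e_{k+1}(\lambda)=\sum_{i=0}^{k} u^h_i\,e_{k+1-i}(\lambda^t).
\]

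Next I would remove $\lambda^t$ in favour of $\rho$. By construction $\lambda^t=\lambda_{k+1}\,\rho$, and since $e_j$ is homogeneous of degree $j$ (the scaling lemma), one has $e_j(\lambda^t)=\lambda_{k+1}^{\,j}\,e_j(\rho)=\lambda_{k+1}^{\,j}\,w_j$. Substituting into both displays and forming the quotient yields
\[
\frac{e_{k+1}(\lambda)}{e_k(\lambda)}
=\frac{\sum_{i=0}^{k} u^h_i\,\lambda_{k+1}^{\,k+1-i}\,w_{k+1-i}}
       {\sum_{i=0}^{k} u^h_i\,\lambda_{k+1}^{\,k-i}\,w_{k-i}}.
\]
Factoring a single power $\lambda_{k+1}$ out of every summand of the numerator, so that $\lambda_{k+1}^{\,k-i}$ remains inside, produces exactly $\gamma\,\lambda_{k+1}$ with the stated expression for $\gamma$, completing the argument.

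The argument is essentially bookkeeping, and the one point deserving care is the truncation of the convolution sums: one must use that $\lambda^h$ has exactly $k$ nonzero entries so that no head term of order exceeding $k$ survives, which is what collapses the $m=k+1$ sum back to $i\le k$. I would also note that the codomain $\R^{k+2}$ of $f$ is exactly what makes $w_{k+1}=e_{k+1}(\rho)$ available for the $i=0$ term, and that the identity presupposes $e_k(\lambda)\neq 0$, i.e. at least $k$ positive eigenvalues, so that the denominator and hence $\gamma$ are well defined.
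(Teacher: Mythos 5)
Your proof is correct and follows essentially the same route as the paper's: apply the convolution theorem to $\lambda=[\lambda^h,\lambda^t]$, use $e_{k+1}(\lambda^h)=0$ (the paper's $u^h_{k+1}=0$) to truncate the sums at $i=k$, and rescale the tail via the scaling lemma with $\lambda^t=\lambda_{k+1}\rho$ before forming the quotient. Your added remarks on the truncation and on needing $e_k(\lambda)\neq 0$ merely make explicit what the paper leaves implicit.
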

\begin{proof}
 By the convolution theorem and the definition of $\rho$ one has
 $$u = u^t * u^h$$
 where $u=f(\lambda)$ and $u^t = f(\lambda^t)=f(\lambda_{k+1}\rho)$.

    Using the scaling lemma and $u^h_{k+1}=0$ one then gets
    $$\frac{u_{k+1}}{u_k} 
    = \lambda_{k+1}\frac{\sum_{i=0}^k \lambda_{k+1}^{k-i}\; w_{k+1-i} u^h_i}{\sum_{i=0}^k\; \lambda_{k+1}^{k-i} w_{k-i} u^h_i}.$$
\end{proof}

We will now show results used to obtain bounds for the case of slowly decreasing $\rho_i$. In these cases one observes that the sequence $w_{i+1}/w_i$ first increases before it decreases. The main tool to obtain bounds is the monotonicity in of the ratios $e_{k+1}(\lambda)/e_k(\lambda)$ in $\lambda$.

\begin{lemma}[monotonicity]\label{lem:mono}
Let $0 < \lambda \leq \mu$ component wise then
$$\frac{e_{k+1}(\lambda)}{e_k(\lambda)} \leq \frac{e_{k+1}(\mu)}{e_k(\mu)}.$$
\end{lemma}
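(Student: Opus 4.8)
The plan is to show that the ratio $R(\lambda) := e_{k+1}(\lambda)/e_k(\lambda)$ is nondecreasing in each coordinate separately, and then pass from $\lambda$ to $\mu$ by raising one coordinate at a time. Since $\lambda \leq \mu$ componentwise, this chain of single-coordinate increases never decreases $R$, giving the claim. Throughout this process every entry stays positive, so (as long as there are at least $k$ positive entries, which holds whenever the ratio is considered) each $e_k$ encountered is strictly positive and the ratios are well defined at every step.

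To study the dependence on one coordinate, fix an index $j$, set $t = \lambda_j$, and let $\lambda'$ be the vector obtained by deleting the $j$th entry. The deletion recurrence $e_m(\lambda) = t\, e_{m-1}(\lambda') + e_m(\lambda')$ gives
$$R(\lambda) = \frac{t\, e_k(\lambda') + e_{k+1}(\lambda')}{t\, e_{k-1}(\lambda') + e_k(\lambda')},$$
a linear-fractional (M\"obius) function of $t$. Its derivative in $t$ has the constant sign of $ad-bc$ with $a=e_k(\lambda')$, $b=e_{k+1}(\lambda')$, $c=e_{k-1}(\lambda')$, $d=e_k(\lambda')$, namely
$$\frac{\partial R}{\partial t} = \frac{e_k(\lambda')^2 - e_{k-1}(\lambda')\,e_{k+1}(\lambda')}{\bigl(t\, e_{k-1}(\lambda') + e_k(\lambda')\bigr)^2}.$$
Because the denominator is positive, monotonicity reduces to the single inequality
$$e_k(\lambda')^2 \geq e_{k-1}(\lambda')\,e_{k+1}(\lambda'),$$
i.e.\ the log-concavity of the elementary symmetric polynomials evaluated at the nonnegative reals $\lambda'$.

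This log-concavity is exactly Newton's inequality. In normalized form, writing $p_m = e_m/\binom{N}{m}$ for the $N$ entries of $\lambda'$, Newton's inequalities state $p_m^2 \geq p_{m-1}p_{m+1}$; combined with the elementary log-concavity of binomial coefficients, $\binom{N}{m}^2 \geq \binom{N}{m-1}\binom{N}{m+1}$, and the nonnegativity of all terms, this yields the un-normalized form $e_m^2 \geq e_{m-1}e_{m+1}$ used above. I would either cite Newton's inequalities directly or give the short real-rootedness proof: the polynomial $\prod_i (x+\lambda'_i)$ has only real (nonpositive) roots, repeated differentiation preserves real-rootedness by Rolle's theorem, and differentiating down to a quadratic whose coefficients are proportional to $e_{k-1},e_k,e_{k+1}$ forces its discriminant to be nonnegative.

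The main obstacle is the inequality $e_k^2 \geq e_{k-1}e_{k+1}$; the rest is bookkeeping. I expect the cleanest route to be the Rolle/real-rootedness argument, since it sidesteps the binomial factor entirely. I would also note that only the nondecreasing (not strict) conclusion is needed, so degenerate configurations with repeated or vanishing entries of $\lambda'$ require no special treatment.
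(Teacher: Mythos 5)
Your proof is correct, but it takes a genuinely different route from the paper. The paper does not argue coordinate-wise at all: it deduces the lemma from the Marcus--Lopes superadditivity inequality
$$\frac{e_{k+1}(\lambda+\mu)}{e_k(\lambda+\mu)} \geq \frac{e_{k+1}(\lambda)}{e_k(\lambda)} + \frac{e_{k+1}(\mu)}{e_k(\mu)},$$
applied with the second argument $\mu-\lambda\geq 0$, so that monotonicity follows by simply discarding the second, nonnegative, term on the right. That is a one-line reduction, but it rests on a cited and substantially deeper result. Your argument --- raise one coordinate at a time, note that the deletion recurrence makes $e_{k+1}/e_k$ a M\"obius function of the coordinate being raised, and reduce the sign of its derivative to the log-concavity $e_k^2 \geq e_{k-1}e_{k+1}$ --- is self-contained and elementary, needing only Newton's inequalities, a classical fact with a short proof; this is presumably what the paper alludes to when it says the lemma ``can be proven directly.'' The one imprecision is in your sketch of the real-rootedness proof of Newton's inequality: repeated differentiation of $\prod_i (x+\lambda_i')$ alone never isolates a quadratic with coefficients proportional to $e_{k-1},e_k,e_{k+1}$, since it only strips low-order terms; one must also reverse the polynomial (equivalently, differentiate the homogenized two-variable form in both variables), and the resulting discriminant yields the normalized inequality $p_k^2\geq p_{k-1}p_{k+1}$, from which the unnormalized form follows, as you note, via log-concavity of the binomial coefficients. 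Since you also offer to cite Newton's inequalities directly, this does not affect the correctness of the proposal.
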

This lemma can be proven directly but is also a consequence of a result by Marcus and Lopes~\cite{marl57}:
$$\frac{e_{k+1}(\lambda+\mu)}{e_k(\lambda+\mu)} \geq \frac{e_{k+1}(\lambda)}{e_k(\lambda)} + \frac{e_{k+1}(\mu)}{e_k(\mu)}.$$ 
This holds for $\mu$ and $\lambda$ being nonegative.

A direct application of the convolution theorem gives the representation
\begin{proposition}
  \begin{equation}\label{conv1}
    f(\lambda) = {u^{l_\lmax}}*\cdots*u^0  
  \end{equation}
  where $u^l=f(q^l\epsilon_{2^l})$ has the components
  $$u^l_j = q^{l(j-1)}\binom{2^k}{j-1},\quad j=1,\ldots,2^l.$$
\end{proposition}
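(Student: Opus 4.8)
The plan is to read both assertions directly off the two structural lemmas already available, so that essentially no new computation is needed. First I would make the dyadic decomposition of the spectrum $\lambda$ explicit. By construction it consists, for each level $l=0,1,\ldots,l_\lmax$, of exactly $2^l$ entries all equal to $q^l$; writing $\epsilon_{2^l}$ for the vector of $2^l$ ones, the $l$-th block is precisely $q^l\epsilon_{2^l}$, so the full spectrum is the concatenation
$$\lambda = [\,q^{l_\lmax}\epsilon_{2^{l_\lmax}},\; q^{l_\lmax-1}\epsilon_{2^{l_\lmax-1}},\; \ldots,\; q^0\epsilon_{2^0}\,].$$
Because $f(\lambda)$ records the coefficients of $\prod_i(1+\lambda_i x)$ and concatenation of spectra corresponds to multiplication of these polynomials, the operation $*$ is both associative and commutative; in particular the order in which the blocks are listed is immaterial.

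Next I would apply the convolution theorem $f([\lambda,\mu]) = f(\lambda)*f(\mu)$, peeling off one block at a time. A short induction on the number of blocks then gives
$$f(\lambda) = f(q^{l_\lmax}\epsilon_{2^{l_\lmax}}) * \cdots * f(q^0\epsilon_{2^0}) = u^{l_\lmax} * \cdots * u^0,$$
where I set $u^l := f(q^l\epsilon_{2^l})$. This is the first assertion of the proposition.

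Finally I would evaluate the components of each factor. With the indexing convention $f_j = e_{j-1}$, the scaling lemma yields $f_j(q^l\epsilon_{2^l}) = (q^l)^{j-1} f_j(\epsilon_{2^l})$, and $f_j(\epsilon_{2^l}) = e_{j-1}(1,\ldots,1) = \binom{2^l}{j-1}$, since the elementary symmetric polynomial of $2^l$ ones simply counts the $(j-1)$-element subsets. Combining the two identities gives $u^l_j = q^{l(j-1)}\binom{2^l}{j-1}$ for $j=1,\ldots,2^l$, as claimed.

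I do not expect a genuine obstacle here: the statement is a bookkeeping consequence of the convolution theorem and the scaling lemma. The only points that require care are matching the dyadic counts $2^l$ with the corresponding values $q^l$ in the block decomposition, tracking the index shift $f_j = e_{j-1}$ so that the binomial coefficient and the power of $q$ receive the correct argument, and noting (as above) that $*$ is associative and commutative so that the blocks may be grouped in any order.
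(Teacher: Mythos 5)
Your proof is correct and is essentially the paper's own argument: the paper offers no written proof beyond the remark that the proposition is ``a direct application of the convolution theorem,'' and your block decomposition $\lambda = [\,q^{l_\lmax}\epsilon_{2^{l_\lmax}},\ldots,q^0\epsilon_{2^0}\,]$, the induction peeling off one block at a time, and the evaluation of components via the scaling lemma together with $e_{j-1}(1,\ldots,1)=\binom{2^l}{j-1}$ is exactly that application written out. You also correctly read the statement's $\binom{2^k}{j-1}$ as a typo for $\binom{2^l}{j-1}$, the only reading consistent with $u^l=f(q^l\epsilon_{2^l})$ and the indexing $f_j=e_{j-1}$ implicit in the scaling lemma.
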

This proposition is used to show that the ratio $e_{k+1}/e_k$ is computationally feasible for the piecewise spectrum $\mu$ used in Figure~\ref{fig:ev}. 
The complexity of computing the $l_\lmax$ convolutions of size $k+1$ is of order $O(l_\lmax (k+1)^2)$. Using this formula is typically much faster than using the standard recursions which results in a total complexity of $O(n(k+1))$ for very large $n$. One observes that often only a small number of components of the $u^j$ are substantially different from zero so that the complexity can be further reduced.

\section{Conclusion}
While the CUR method combined with volume sampling admits an explicit and exact formula for the expected approximation error of the computed rank $k$ approximation there is little known about the performance of the method for slowly decreasing eigenvalues. This paper provides new error bounds for the CUR method with volume sampling which shows that this approach is competitive with the optimal approach for slowly decreasing eigenvalues even for large approximation ranks.

reviews the basic theory and provides a framework and some simple examples which provide some insights on how the CUR method performs well in particular for these cases of slowly decreasing eigenvalues. Future work may consider the effect of the initial eigenvalues and the rate of decrease of the tail of the spectrum in more detail using the convolution formula provided here.

\printbibliography

\end{document}